\newcolumntype{d}{D{.}{.}{-1}}
\newcommand{\f}{{\mathsf{f}}}
\newcommand{\cC}{\mathcal{C}}
\newcommand{\cE}{\mathcal{E}}
\newcommand{\cJ}{\mathcal{J}}
\newcommand{\cK}{\mathcal{K}}
\newcommand{\cM}{\mathcal{M}}
\newcommand{\cN}{\mathcal{N}}
\newcommand{\cV}{\mathcal{V}}
\newcommand{\mT}{T}
\newtheorem{proposition}{Proposition}
\newtheorem{problem}{Problem}
\newtheorem{theorem}{Theorem}
\renewcommand{\t}{^{\mbox{\scriptsize \mT}}}
\title{\textbf{Minimum Variance and Covariance Steering Based on Affine Disturbance Feedback Control Parameterization}}
\author{Efstathios Bakolas \thanks{E. Bakolas
is an Associate Professor in the Department of Aerospace Engineering
and Engineering Mechanics, The University of Texas at Austin,
Austin, Texas 78712-1221, USA, Email: bakolas@austin.utexas.edu}}
\begin{document}

\maketitle

\begin{abstract}
The goal of this paper is to address finite-horizon minimum-variance and covariance steering problems for discrete-time stochastic (Gaussian) linear systems. On the one hand, the minimum variance problem seeks for a control policy that will steer the state mean of an uncertain system to a prescribed quantity while minimizing the trace of its terminal state covariance (or variance). On the other hand, the covariance steering problem seeks for a control policy that will steer the covariance of the terminal state to a prescribed positive definite matrix. We propose a solution approach that relies on the stochastic version of the affine disturbance feedback control parametrization according to which the control input at each stage can be expressed as an affine function of the history of disturbances that have acted upon the system. Our analysis reveals that this particular parametrization allows one to reduce the stochastic optimal control problems considered herein into tractable convex programs with essentially the same decision variables. This is in contrast with other control policy parametrizations, such as the state feedback parametrization, in which the decision variables of the convex program do not coincide with the controller's parameters of the stochastic optimal control problem. In addition, we propose a variation of the control parametrization which relies on truncated histories of past disturbances. We show that by selecting the length of the truncated sequences appropriately, we can design suboptimal controllers which can strike the desired balance between performance and computational cost. 
\end{abstract}

\section{Introduction}\label{s:intro}
One of the most fundamental problems in linear system theory is the
(finite-horizon) controllability problem which seeks for a control
signal (in continuous time) or a control sequence (in discrete time)
that will steer the system from a given initial state to a
prescribed one at a given final time. In the case of stochastic
linear systems, the controllability
problem can admit different interpretations and problem
formulations. In this work, we seek for control policies that will steer the  mean (first moment) and the covariance / variance (centered second moment) of the terminal state of a discrete-time linear stochastic system ``close'' to the respective goal quantities.

We will consider two practical variations of the latter stochastic control problem. In the first problem formulation, we seek for a control policy that will steer the mean of the terminal state to a prescribed vector and more importantly, the terminal state covariance / variance\footnote{The terms variance and covariance will be used interchangeably in order to abide to the standard terminology from the control literature; refer to Section \ref{sub:notation} for the precise definitions} to a prescribed positive definite matrix
while minimizing the control effort used for the latter transfer. We refer to the latter problem as the optimal \textit{covariance steering} problem. 
Due to the constraint on the terminal state covariance, the covariance steering problem does not correspond to a standard Linear Quadratic Gaussian (LQG) control problem. In many practical problems, however, it may not be clear how to choose a ``good'' terminal state covariance. More importantly in the presence of input constraints, the existence of admissible control inputs that will satisfy the hard terminal constraints on the state covariance may not be easily verifiable. In the second problem formulation, we seek a control policy that will steer the mean of the terminal state to a prescribed vector while minimizing the trace of the terminal state covariance / variance. The latter quantity can be viewed as a measure of the dispersion of the end points of sample trajectories of the system around the terminal mean state. Since there are no explicit specifications on the terminal state covariance, it is likely that the corresponding control policy will require the use of excessive control effort in order to reach the desired terminal mean state with the maximum ``accuracy.'' In order to avoid this, we introduce a constraint on the maximum expected value of the control effort that can be used by the controller. We will refer to the latter stochastic optimal control problem as the \textit{constrained minimum variance steering} problem. 

\textit{Literature Review:} Infinite-horizon stochastic control problems with constraints on the terminal state covariance in both
continuous-time and discrete-time settings have been addressed by Skelton and his
co-authors in \cite{p:skeltonIJC,p:skeltonTAC,p:yasuda1993,p:Grig97,p:levy1997discrete}.
The finite-horizon problem for the continuous-time case was recently addressed in
\cite{p:georgiou15A,p:georgiou15B,p:georgiou15C} whereas the discrete-time case in~\cite{p:bakcdc16,p:PT2017,p:BAKOLAS2018,p:golds2017,p:okamoto2018}. Covariance control problems for the partial information case can be found in \cite{p:EBACC17,p:bakCDC2018,p:bakTAC2019,p:ridderhof2020,p:kotsalis2020}. 
There exist also problem formulations which utilize soft constraints on the terminal state covariance in the form of appropriate terminal costs. Characteristic examples include the squared Wasserstein distance~\cite{p:halderacc16} or the squared $\mathcal{L}_2$ spatial norm~\cite{p:grune2019} between the goal distribution and the distribution attained by the terminal state.

In our previous work, we have addressed covariance steering and
minimum variance steering problems for discrete-time stochastic linear systems under both full state and partial state information based on convex optimization techniques~\cite{p:bakcdc16,p:EBACC17,p:EBSCL2018,p:BAKOLAS2018,p:bakTAC2019}.
In these references, the reduction of the stochastic optimal control problems to convex programs relied on the utilization of the so-called state feedback control parametrization~\cite{p:GOULART2006}. According to this parametrization, the control
input at each stage corresponds to an affine function of the states (or outputs) visited up to the present stage.
With the latter parametrization, the stochastic optimal control problem can be reduced to a convex program whose decision variables, however, do not coincide with the controller's parameters; the new decision variables are obtained from the controller's parameters by means of a bilinear transformation~\cite{p:skaf2010}.


%


\textit{Main Contribution:} In this paper, we present a new solution approach to the minimum variance and covariance steering 
problems under the assumption of full state information. Our approach is based on a control policy parameterization which can be interpreted as the stochastic version of the affine disturbance feedback control parametrization~\cite{p:bennemiro2004}. In the proposed control
policy parametrization, the control input at each stage can be
expressed as an affine function of the (complete or a truncated version of the) history of disturbances that have acted upon the system. This paper is a natural extension of \cite{p:bakCDC2018}, in which we employed a similar parametrization for the incomplete and imperfect state information case for the covariance steering problem only. In \cite{p:bakCDC2018}, the control input at each stage was taken to be an affine function of the history of output residuals computed via a Kalman filter. See also \cite{p:kotsalis2020} for an alternative parametrization based on the idea of purified output measurements.
By using this particular control policy
parametrization, one can directly reduce both the covariance and minimum variance control problems into tractable convex optimization problem, whose decision variables are essentially the same with the controller parameters. This is in sharp contrast with approaches that rely on the state feedback control parametrization
which require significant pre-processing in order to associate the controller parameters (decision variables of the stochastic optimal control problem) with the decisions variables of the corresponding convex program by means of bilinear transformations. The proposed policy parametrization does not require a similar pre-processing because the decision variables of the stochastic optimal control problems and the corresponding convex programs are directly associated with each other (no use of a bilinear transformation is needed). In addition, one can consider a variation of the proposed control policy parametrization in which the control input relies on truncated histories of past disturbances. By appropriately selecting the length of the truncated histories of the past disturbances, one can tune the size of the convex programs which will yield suboptimal controllers which strike the desired balance between performance and computational cost.

\textit{Structure of the paper:} The rest of the paper is organized
as follows. In Section~\ref{s:form}, we formulate the minimum variance and covariance steering problems. The control policy parameterization and the reduction of the two problems into equivalent convex programs is described in Section~\ref{s:convex}. 
%
Finally, Section~\ref{s:concl} concludes
the paper with a summary of remarks.

\section{Problem Formulation}\label{s:form}

\subsection{Notation}\label{sub:notation}

We denote by $\mathbb{R}^n$ the set of $n$-dimensional real vectors
and by $\mathbb{Z}$ the set of integers. We write $\mathbb{E}\left[
\cdot\right]$ to denote the expectation operator. Given two integers
$\tau_1, \tau_2$ with $\tau_1\leq \tau_2$, 
then $[\tau_1, \tau_2]_d := [\tau_1,\tau_2] \cap \mathbb{Z}$. Given a sequence $\mathscr{X}:=\{x_i:~i\in[1,m]_d\}$, 
we denote by $\mathrm{vertcat}(\mathscr{X})$ the concatenation of the $m$ vectors of $\mathscr{X}$, that is, $\mathrm{vertcat}(\mathscr{X}) := [x_1\t, \dots, x_m\t]\t$. Given a
square matrix $\mathbf{A}$, we denote its trace by
$\mathrm{trace}(\mathbf{A})$. We write $\mathbf{0}$ and $I_n$ to
denote the zero matrix and the $n$-dimensional identity matrix, respectively. The space of real symmetric $n\times n$ matrices will be denoted by
$\mathbb{S}_n$. Furthermore, we will denote the convex cone of
$n\times n$ (symmetric) positive semi-definite and (symmetric)
positive definite matrices by $\mathbb{S}^{+}_n$ and
$\mathbb{S}^{++}_n$, respectively. Given two matrices $A, B \in
\mathbb{S}^{+}_n$, then we write $B \preceq A$ if and only if $A-B \in \mathbb{S}^{+}_n$, where $\preceq$ denotes the partial Loewner order in $\mathbb{S}^{+}_n$. Given $A \in \mathbb{S}^+_n$, we denote by $A^{1/2} \in \mathbb{S}^{+}_n$ its (unique) square root, that is, $A^{1/2} A^{1/2} = A$. Let $x\in \mathbb{R}^n$, $\mathbf{\Sigma}  \in \mathbb{S}^{++}_n$, and $\gamma > 0$, we write $\cE_{\gamma}(x;\mathbf{\Sigma})$ to denote the ellipsoid $\{z\in\mathbb{R}^n: (z - x)\t \mathbf{\Sigma}^{-1} (z -x) \leq \gamma\}$.

Finally, we write $\mathrm{bdiag}(A_1,$ $\dots, A_\ell)$ to denote the block diagonal matrix formed by the matrices $A_i$, $i\in \{1,\dots, \ell\}$. We denote the mean and the covariance / variance of a random vector $z$ by, respectively, $\mu_z$ and $\mathrm{var}_z$, where $\mu_z:=\mathbb{E}[z]$ and $\mathrm{var}_z:= \mathbb{E}[(z-\mu_z)(z-\mu_z)\t]=\mathbb{E}[zz\t]-\mu_z \mu_z\t$. Given a discrete stochastic process $\{x(t;\omega):t \in [0,T]_d\}$, or more compactly $\{x(t):t \in [0,T]_d\}$, defined on a probability space $(\Omega,\mathfrak{F}, \mathbb{P})$, we denote by $\mathrm{cov}_x(t_1, t_2)$, for $t_1$, $t_2\in[0,T]_d$, the covariance (or auto-covariance) of $x(\cdot)$, where $\mathrm{cov}_x(t_1, t_2) := \mathbb{E}\left[
(x(t_1)- \mu_x(t_1))(x(t_2) - \mu_x(t_2))\t\right]$ and $\mu_x(t) = \mathbb{E}\left[ x(t) \right]$. We denote by $\mathrm{var}_x(t)$, the variance of $x(t)$, which is defined as $\mathrm{var}_x(t) := \mathrm{cov}_x(t, t)$, for $t \in [0,T]_d$. Previous definitions are adopted from the classic book on random variables and stochastic processes~\cite{b:papoulis2002}.

\section{Preliminaries}\label{s:prelim}
\subsection{Problem setup}

We consider the following discrete-time stochastic linear system
\begin{subequations}
\begin{align}\label{eq:motion}
x(t+1) & = A(t) x(t) + B(t) u(t) + w(t), \\
x(0) & =x_0,~~ \quad ~~x_0 \sim \cN(\mu_0, \Sigma_0), \label{eq:motion2}
\end{align}
\end{subequations}
for $t \in [0,T-1]_{d}$, where $\mu_0\in
\mathbb{R}^n$ and $\Sigma_0 \in \mathbb{S}_n^{++}$ are given. We
denote by $\mathscr{X}_{0:t} := \{ x(\tau) \in \mathbb{R}^n: \tau \in [0,t]_{d} \}$, for $t\in [0,T]_d$, the state (random) process 
and by $\mathscr{U}_{0:t} := \{ u(\tau) \in
\mathbb{R}^m:~\tau\in [0, t]_{d}\}$, for $t\in [0,T-1]_d$, the input process. Finally, we
denote by $\mathscr{W}_{0:t} := \{ w(\tau) \in \mathbb{R}^n:~\tau\in
[0,t]_{d}\}$, for $t\in [0,T-1]_d$, the noise process, which is assumed to be a sequence of independent and identically distributed normal random variables
with
\begin{align}\label{eq:w1}
\mu_w(t) & = \mathbf{0}, \qquad \mathrm{cov}_w(t, \tau) = \delta(t,\tau) W,
\end{align}
for all $t,\tau\in [0, T-1]_{d}$, where $W\in\mathbb{S}_n^{+}$  and
$\delta(t,\tau):=1$, when $t=\tau$, and $\delta(t,\tau):=0$,
otherwise. It is worth noting that by assuming that $W \in \mathbb{S}_n^{+}$ instead of $W \in \mathbb{S}_n^{++}$ , we cover the case in which $w(t) = D v(t)$ where $D \in \mathbb{R}^{n \times p}$ and $\mathrm{cov}_v(t, \tau) = \delta(t,\tau) V$ with $V \in\mathbb{S}_n^{++}$. In this case, $W = D V D\t \in \mathbb{S}_n^+$ and thus there is no loss of generality in considering a state space model where the noise vector has the same dimension with the state vector as in \eqref{eq:motion}. Furthermore, $x_0$ is independent of $W_{0: T-1}$, that is,
\begin{align}\label{eq:x0w}
\mathbb{E}\left[ x_0  w(t)\t  \right] & =
\mathbf{0}, \qquad \mathbb{E}\left[ w(t) x_0\t  \right] = \mathbf{0},
\end{align}
for all $t \in [0, T-1]_d$. All random variables are defined on a (fixed) complete probability space $(\Omega, \mathfrak{F}, \mathbb{P})$. Finally, we assume perfect state information, that is, at each time $t$ the system is aware of the exact realization of the state process at the same time as well as all previous stages (by keeping track of all the states visited by
the system). More precisely, if a particular $\omega\in\Omega$ corresponds to an experimental outcome, then the information available to the system at time $t$ is the sequence $\{ x(\tau; \omega):~\tau\in [0,t] \}$ (sample trajectory of the state process $\mathscr{X}$ corresponding to the particular $\omega\in\Omega$). Throughout the paper, we will drop the dependence of random variables on the experimental outcome to keep the notation simple.

Equation~\eqref{eq:motion} can be written more compactly as follows:
\begin{align}\label{eq:bmx}
\bm{x} = \mathbf{G}_{\bm{u}} \bm{u} + \mathbf{G}_{\bm{w}} \bm{w} +
\mathbf{G}_{0} x_0,
\end{align}
where $\bm{x} := \mathrm{vertcat}(\mathscr{X}_{0:T})  \in \mathbb{R}^{(T+1)n}$, $\bm{u} := \mathrm{vertcat}(\mathscr{U}_{0:T-1}) \in\mathbb{R}^{Tm}$
and $\bm{w} := \mathrm{vertcat}(\mathscr{W}_{0:T-1}) \in\mathbb{R}^{Tn}$.
Furthermore,
\begin{align*}
& \mathbf{G}_{\bm{u}} := \left[ \begin{smallmatrix} \mathbf{0} & \mathbf{0} & \dots & \mathbf{0} \\
B(0) &  \mathbf{0} & \dots & \mathbf{0} \\
\Phi(2,1)B(0) & B(1) & \dots & \mathbf{0} \\
\vdots & \vdots & \dots & \vdots \\
\Phi(T,1)B(0) & \Phi(T,2)B(1) &
\dots & B(T-1)\end{smallmatrix} \right],\\
&\mathbf{G}_{\bm{w}} := \left[ \begin{smallmatrix} \mathbf{0} & \mathbf{0} & \dots & \mathbf{0} \\
I & \mathbf{0} & \dots & \mathbf{0} \\
\Phi(2,1) & I & \dots & \mathbf{0} \\
\vdots & \vdots & \dots & \vdots \\
\Phi(T,1) & \Phi(T,2) & \dots & I
\end{smallmatrix} \right],~~\mathbf{G}_{0}:=
\left[ \begin{smallmatrix} I \\ \Phi(1,0) \\ \vdots \\ \Phi(T,0)
\end{smallmatrix}\right],
\end{align*}
where $\Phi(t,\tau) := A(t-1)\dots A(\tau)$ and $\Phi(t,t) = I$,
for $t\in [1,T]_d$ and $\tau \in [0,t-1]_d$ with $t>\tau$.

Furthermore, the first two moments of $\bm{w}$ are given by
\begin{equation}\label{eq:bmw}
\mathbb{E}\big[ \bm{w} \big] = \bm{0},~~~\mathbb{E}\big[ \bm{w} \bm{w}\t\big] = \mathbf{W},
\end{equation}
where in view of \eqref{eq:w1}
\begin{align*}
 \mathbf{W} & := \mathbb{E}\big[
\mathrm{bdiag}(w(0)w(0)\t, \dots, w(T-1) w(T-1)\t) \\
& = \mathrm{bdiag}(W, \dots, W).
\end{align*}


\subsection{Controller Parametrization and Problem Formulation}

As we have already mentioned, the assumption of perfect state
information implies that at each stage $t\in[1,T]_d$ the realization of the state process up to and including stage $t$, $\mathscr{X}_{0:t}$, is perfectly known to the system. Similarly, the (deterministic) sequence $\hat{\mathscr{X}}_{0:t} :=\{ \hat{x}(0),
\dots, \hat{x}(t)\}$ where $\hat{x}(0) = x(0)$ and $\hat{x}(\tau)$, for $\tau \in [1,t]_d$, corresponds to the state that the system would attain if no stochastic disturbances were acting upon the system at $t=\tau-1$  and at the same stage the system was at state $x(\tau-1)$ (the state attained at the same stage by the stochastic system which was subject to disturbances in the previous stages) and the input $u(\tau-1)$ was applied to it (the same input was also applied to the stochastic system at the same stage); we can write $x(t) = x(\tau =t | x(\tau-1),u(\tau-1), w(\tau-1))$ and $\hat{x}(t) = x(\tau =t | x(\tau-1),u(\tau-1), 0)$. 
Clearly,
\begin{align}\label{eq:error}
x(t) - \hat{x}(t) & = x(\tau =t | x(\tau-1),u(\tau-1), w(\tau-1)) \nonumber \\
&~~~ - x(\tau = t | x(\tau-1),u(\tau-1), 0) \nonumber \\
& = w(t - 1),
\end{align}
and given that both $x(t)$ and $\hat{x}(t)$ are perfectly known at stage $\tau = t$, we conclude that the disturbance at the previous stage, $w(t - 1)$, is also
perfectly known. More precisely, the whole past history of the particular
realization of the noise process $W_{0:t-1} := \{ w(\tau):~\tau \in
[0, t - 1] \}$ is known to the system at stage $t$. Therefore,
it is possible, based on the information available, to utilize control policies which are sequences of control laws that are affine functions of the elements of (any specific realization of)
$W_{0:t-1}$, that is,
\begin{align}\label{eq:policy}
\kappa(t,W_{0:t-1})& = \begin{cases} \bar{u}(t) + \sum_{\tau =0}^{t-1}
K_{(t-1, \tau)} w(\tau),\\
~~\qquad~~~\qquad~~\mathrm{if}~ t \in[1,T-1]_d,\\
\bar{u}(0),~~~\qquad~~~\mathrm{if}~ t=0,
\end{cases}
\end{align}
where $K(t-1,\tau)\in \mathbb{R}^{m\times n}$ for all $\tau\in[0,t-1]_d$.
The above control policy parametrization corresponds to the
stochastic version of the so-called affine disturbance feedback
parametrization~\cite{p:bennemiro2004}.

If we set $u(t)=\kappa(t,W_{0:t-1})$, for all $t\in[0, T-1]_d$ with 
$\kappa(t,W_{0:t-1})$ as defined in \eqref{eq:policy}, then the
resulting closed-loop dynamics can be written as follows:
\begin{align}\label{eq:cloop}
x(t+1) & = A(t) x(t) + B(t)\bar{u}(t) \nonumber \\
&~~~ + B(t) \Big( \sum_{\tau =0}^{t-1} K_{(t-1,\tau)} w(\tau) \Big) +
w(t).
\end{align}

Next, we provide the precise formulations of the minimum variance steering and covariance steering problems based on the control policy parametrization which is described in \eqref{eq:policy}.

\begin{problem}[Minimum Variance Steering]\label{problem1}
Let $\mu_\f \in \mathbb{R}^n$ be given. Consider the system described by~\eqref{eq:cloop}. Then, find the
collection of matrix gains $\mathscr{K} := \{ K(t,\tau): (t,\tau) \in
[0,T-2]\times [0,T-2],~t \geq \tau \}$
and the sequence of vectors $\overline{\mathscr{U}} := \{\bar{u}(0),
\dots, \bar{u}(T-1) \}$ that minimize the following performance
index:
\begin{align}\label{eq:perfindex}
J_1(\overline{\mathscr{U}}, \mathscr{K}) & := \mathrm{tr}(\mathrm{var}_x(T) ),
\end{align}
subject to the input constraint $C(\overline{\mathscr{U}}, \mathscr{K}) \leq 0$, where
\begin{equation}\label{eq:inputeffort}
C(\overline{\mathscr{U}}, \mathscr{K}) := \mathbb{E}\Big[  \sum_{t=0}^{T-1}
u(t)\t u(t) \Big] - \rho^2,
\end{equation}
as well as the boundary condition on the terminal mean
\begin{equation}\label{eq:termmean1}
\mu_x(T) = \mu_\f. 
\end{equation}
\end{problem}

\begin{remark}
The performance index $J_1(\overline{\mathscr{U}}, \mathscr{K})$ defined in
\eqref{eq:perfindex} corresponds to a single terminal cost term. The input constraint \eqref{eq:inputeffort} is imposed in order to avoid using excessive control effort to achieve unnecessarily ``small'' terminal variance or ``accuracy''. In particular, the parameter $\rho$ will have to be tuned so that it strikes the desired balance between accuracy (measured in terms of $\mathrm{tr}(\mathrm{var}_x(T) )$) and the cost incurred to achieve the latter accuracy (measured in terms of the control effort over the time horizon $[0,T-1]_d$).
\end{remark}

\begin{problem}[Covariance Steering]\label{problem2}
Let $\mu_\f \in \mathbb{R}^n$ and $\Sigma_\f \in \mathbb{S}^{++}_n$ be given. Consider the system described by~\eqref{eq:cloop}. Then, find the
collection of matrix gains $\mathscr{K} := \{ K_{(t,\tau)}: (t,\tau) \in
[0,T-2]\times [0,T-2],~t \geq \tau \}$
and the sequence of vectors $\overline{\mathscr{U}} := \{\bar{u}(0),
\dots, \bar{u}(T-1) \}$ that minimize the performance
index
\begin{align}\label{eq:perfindex2}
J_2(\overline{\mathscr{U}}, \mathscr{K}) & := 
\sum_{t=0}^{T-1}\mathbb{E}[ u(t)\t u(t) ]
\end{align}
subject to the following boundary conditions on the terminal state mean and covariance:
\begin{equation}\label{eq:termcov}
\mu_x(T) = \mu_\f,~~~~(\Sigma_\f - \mathrm{var}_x(T))
\in \mathbb{S}_{n}^{+}.
\end{equation}
\end{problem}

\begin{remark}
The positive semi-definite constraint on the terminal state variance $(\Sigma_\f - \mathrm{var}_x(T)) \in \mathbb{S}_{n}^{+}$ corresponds to a relaxation of the hard inequality constraint $\mathrm{var}_x(T)
= \Sigma_\f$, which determines, however, a set of non-convex equality constraints~\cite{p:BAKOLAS2018}. 
The constraint $(\Sigma_\f - \mathrm{var}_x(T) )\in \mathbb{S}_{n}^{+}$ places a practical upper bound on the minimum acceptable accuracy (measured in terms of the second centered moment) with which the system's mean state will reach the goal vector $\mu_\f$.
\end{remark}

\begin{remark}
The performance index $J_2(\overline{\mathscr{U}}, \mathscr{K})$ defined in
\eqref{eq:perfindex2} corresponds to the expected value of the control effort over $[0, T-1]_d$ which is required for the transfer of the state mean and covariance of the stochastic system~\eqref{eq:motion} to their desired terminal quantities. It is worth noting that in the formulation of Problem~\ref{problem2}, the requirement on the accuracy at which the state reaches the desired terminal mean $\mu_\f$ is enforced in an explicit way by means of the constraint on the state covariance $\mathrm{var}_x(T)) \preceq \Sigma_\f$ in contrast with Problem~\ref{problem1}, in which the terminal cost, $\mathrm{tr}(\mathrm{var}_x(t))$, can be viewed as a ``soft'' (or indirect) constraint on the terminal accuracy. On the other hand, no explicit input constraints are imposed on the control effort that the system can use to achieve the desired accuracy in Problem~\ref{problem2} which is in contrast with Problem~\ref{problem1}, in which an explicit bound on the expected value of the control effort that can be used by the system is considered.
\end{remark}

\subsection{Reduction of Minimum Variance and Covariance Steering Problems to Tractable Convex Programs}

Next, we will reduce the minimum variance steering problem (Problem~\ref{problem1}) and the covariance steering problem (Problem~\ref{problem2}) to tractable convex programs. 
To this aim, we will obtain expressions for the mean and variance of the state of the closed-loop system that results by setting $u(t) = \kappa(t,W_{0:t-1})$, for all $t\in[0, T-1]$,
where $\kappa(t,W_{0:t-1})$ is defined in \eqref{eq:policy}. In particular, it follows readily that
\begin{equation}\label{eq:controlcmp}
\bm{u} = \bar{\bm{u}} + \bm{\mathcal{K}} \bm{w},
\end{equation}
where $\bar{\bm{u}} := \mathrm{vertcat}(\overline{\mathscr{U}})$ with 
\begin{equation}\label{eq:vertcat}
\mathrm{vertcat}(\overline{\mathscr{U}}:= \begin{bmatrix} \bar{u}(0)\t , \dots, 
\bar{u}(T-1)\t \end{bmatrix} \t
\end{equation}
and $\bm{\mathcal{K}} := \big[ \begin{smallmatrix}\mathbf{0} & \mathbf{0} \\ \mathbf{K} & \mathbf{0}\end{smallmatrix} \big]$ where $\mathbf{K}$ is the block lower-triangular matrix formed by the elements of $\mathscr{K}$; we write $\mathbf{K} := \mathrm{bltr}(\mathscr{K})$, where
\begin{align}\label{eq:mathcalK}
&  \mathrm{bltr}(\mathscr{K}) := \left[\begin{smallmatrix} 
K_{(0, 0)} & \mathbf{0}  &  \dots & \mathbf{0}  \\
K_{(1, 0)} & K_{(1, 1)}  &  \dots & \mathbf{0}  \\
\vdots & \vdots & \ddots & \vdots \\
K_{(T-2, 0)} & K_{(T-2,\sigma + 1)} & \dots & K_{(T-2, T-2)}
\end{smallmatrix} \right].
\end{align} 
The operator $\mathscr{K} \mapsto \mathrm{bltr}(\mathscr{K})$ returns a block lower-triangular matrix whose structure is determined by \eqref{eq:mathcalK} given a double sequence of matrices. We will denote by $\mathscr{D}$ the set of all pairs $(\bar{\bm{u}}, \bm{\mathcal{K}})\in \mathbb{R}^{Tm} \times \mathbb{R}^{Tm \times Tn}$ where $\bm{\mathcal{K}}$ is a block lower-triangular matrix whose precise structure we just described. We will see later on that $\mathscr{D}$ will correspond to the decision space of two convex programs which are equivalent to Problem 1 and Problem 2.

After plugging \eqref{eq:controlcmp} into \eqref{eq:bmx}, the closed-loop dynamics given in~\eqref{eq:cloop} can be written compactly as
follows:
\begin{align}\label{eq:bmxK}
\bm{x} &  = 
\mathbf{G}_{\bm{u}} \bar{\bm{u}} + (\mathbf{G}_{\bm{w}} +
\mathbf{G}_{\bm{u}} \bm{\mathcal{K}} ) \bm{w} + \mathbf{G}_{0} x_0.
\end{align}
Therefore, $x(t)$ can be extracted by $\bm{x}$ as follows:
\begin{equation}\label{eq:stateproj}
x(t) = \mathbf{P}_{t+1} \bm{x},~~~\forall t\in[0,T]_{d},
\end{equation}
where $\mathbf{P}_{t+1}$ is a block
matrix whose blocks are equal to the zero matrix except from the $(t+1)$-th
block which is equal to the identity matrix.

Next, we provide analytic expressions for the mean and variance of
$\bm{x}$ as well as the state $x(t)$ for $t\in [0,T]_{d}$. 
%
%
\begin{proposition}
The mean and variance of the random vector $\bm{x}$, which satisfies Eq.~\eqref{eq:bmxK}, are given by
\begin{equation}\label{eq:basicfg}
\mu_{\bm{x}} =
\mathfrak{f}(\bar{\bm{u}}),~~~~\mathrm{var}_{\bm{x}}
= \mathfrak{h}( \bm{\mathcal{K}}),
\end{equation}
where
\begin{subequations}
\begin{align}\label{eq:fu}
\mathfrak{f}(\bar{\bm{u}}) & := \mathbf{G}_{\bm{u}}
\bar{\bm{u}} + \mathbf{G}_{0}\mu_0, \\
\mathfrak{h}( \bm{\mathcal{K}}) & :=
 \mathbf{G}_{0} \Sigma_0 \mathbf{G}_{0}\t \nonumber \\
 & ~~~~ + (\mathbf{G}_{\bm{w}} + \mathbf{G}_{\bm{u}} \bm{\mathcal{K}} ) \mathbf{W}(\mathbf{G}_{\bm{w}} +
\mathbf{G}_{\bm{u}} \bm{\mathcal{K}} )\t.\label{eq:huK}
\end{align}
\end{subequations}
Furthermore, the mean and the variance of the state $x(t)$
satisfy, respectively, the following equations:
\begin{align}\label{eq:muvarx}
\mu_x(t)  = 
\mathbf{P}_{t+1}\mathfrak{f}(\bar{\bm{u}}),~~~  
\mathrm{var}_x(t)  = \mathbf{P}_{t+1} \mathfrak{h}(\bm{\mathcal{K}}) \mathbf{P}_{t+1}\t, 
\end{align}
for all $t\in[0,T]_{d}$. 
\end{proposition}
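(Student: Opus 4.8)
The plan is to establish both formulas by direct computation, exploiting only the linearity of the expectation operator together with the independence of $x_0$ and $\bm{w}$ recorded in \eqref{eq:x0w}. I would begin with the mean. Taking expectations on both sides of \eqref{eq:bmxK} and using $\mathbb{E}[\bm{w}] = \bm{0}$ from \eqref{eq:bmw} together with $\mathbb{E}[x_0] = \mu_0$, the disturbance term drops out and one is left immediately with $\mu_{\bm{x}} = \mathbf{G}_{\bm{u}}\bar{\bm{u}} + \mathbf{G}_0\mu_0 = \mathfrak{f}(\bar{\bm{u}})$, which is exactly \eqref{eq:fu}.

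For the variance, I would first form the centered vector by subtracting the mean just obtained, which gives $\bm{x} - \mu_{\bm{x}} = (\mathbf{G}_{\bm{w}} + \mathbf{G}_{\bm{u}}\bm{\mathcal{K}})\bm{w} + \mathbf{G}_0(x_0 - \mu_0)$. The next step is to substitute this into $\mathrm{var}_{\bm{x}} = \mathbb{E}[(\bm{x}-\mu_{\bm{x}})(\bm{x}-\mu_{\bm{x}})\t]$ and expand the outer product into four terms. Two of these are the ``diagonal'' contributions, namely $(\mathbf{G}_{\bm{w}} + \mathbf{G}_{\bm{u}}\bm{\mathcal{K}})\,\mathbb{E}[\bm{w}\bm{w}\t]\,(\mathbf{G}_{\bm{w}} + \mathbf{G}_{\bm{u}}\bm{\mathcal{K}})\t$ and $\mathbf{G}_0\,\mathbb{E}[(x_0-\mu_0)(x_0-\mu_0)\t]\,\mathbf{G}_0\t$, which reduce to the two terms in \eqref{eq:huK} upon recalling $\mathbb{E}[\bm{w}\bm{w}\t] = \mathbf{W}$ from \eqref{eq:bmw} and $\mathbb{E}[(x_0-\mu_0)(x_0-\mu_0)\t] = \Sigma_0$. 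The remaining two are the cross terms $\mathbf{G}_0\,\mathbb{E}[(x_0-\mu_0)\bm{w}\t]\,(\mathbf{G}_{\bm{w}} + \mathbf{G}_{\bm{u}}\bm{\mathcal{K}})\t$ and its transpose.

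The one place that requires care is showing that these cross terms vanish, and this is where \eqref{eq:x0w} enters. Since the block-columns of $\mathbb{E}[x_0\bm{w}\t]$ are precisely the blocks $\mathbb{E}[x_0\,w(\tau)\t] = \mathbf{0}$ guaranteed by \eqref{eq:x0w}, and since $\mathbb{E}[\bm{w}] = \bm{0}$, we obtain $\mathbb{E}[(x_0-\mu_0)\bm{w}\t] = \mathbb{E}[x_0\bm{w}\t] - \mu_0\,\mathbb{E}[\bm{w}]\t = \mathbf{0}$. Hence both cross terms are zero and $\mathrm{var}_{\bm{x}} = \mathfrak{h}(\bm{\mathcal{K}})$, as claimed in \eqref{eq:huK}.

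Finally, the per-stage formulas \eqref{eq:muvarx} follow from the block-selector identity $x(t) = \mathbf{P}_{t+1}\bm{x}$ in \eqref{eq:stateproj}. Because $\mathbf{P}_{t+1}$ is a constant (deterministic) matrix, the expectation passes through it to give $\mu_x(t) = \mathbf{P}_{t+1}\mu_{\bm{x}} = \mathbf{P}_{t+1}\mathfrak{f}(\bar{\bm{u}})$, and the same constancy yields $x(t) - \mu_x(t) = \mathbf{P}_{t+1}(\bm{x}-\mu_{\bm{x}})$, whence $\mathrm{var}_x(t) = \mathbf{P}_{t+1}\,\mathrm{var}_{\bm{x}}\,\mathbf{P}_{t+1}\t = \mathbf{P}_{t+1}\mathfrak{h}(\bm{\mathcal{K}})\mathbf{P}_{t+1}\t$. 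There is no genuine obstacle in this argument; the entire content is the bookkeeping of the outer-product expansion, and the only substantive input is the independence relation \eqref{eq:x0w} that kills the mixed $x_0$--$\bm{w}$ terms.
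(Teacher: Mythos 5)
Your proof is correct and follows essentially the same route as the paper: take expectations of \eqref{eq:bmxK} for the mean, expand the second moment and kill the $x_0$--$\bm{w}$ cross terms via \eqref{eq:x0w} and $\mathbb{E}[\bm{w}]=\bm{0}$, then push the deterministic selector $\mathbf{P}_{t+1}$ through. The only (cosmetic) difference is that you center $\bm{x}$ before expanding, whereas the paper expands $\mathbb{E}[\bm{x}\bm{x}\t]$ and subtracts $\mu_{\bm{x}}\mu_{\bm{x}}\t$ at the end; your bookkeeping is if anything slightly cleaner.
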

\begin{proof}
By applying the expectation operator at both sides of Equation \eqref{eq:bmxK}, we obtain
\begin{align}\label{eq:fproof}
\mu_{\bm{x}} := \mathbb{E}\left[ \bm{x} \right] & = \mathbf{G}_{\bm{u}} \bar{\bm{u}} +
\mathbf{G}_{0}\mu_0 =:\mathfrak{f}(\bar{\bm{u}}),
\end{align}
where in our derivation we have used \eqref{eq:motion2}, which implies that $\mathbb{E}[x_0] = \mu_0$, and \eqref{eq:w1}, which implies that $\mathbb{E}[\bm{w}] =
\bm{0}$. This proves the validity of Eq.~\eqref{eq:fu}. In addition, \eqref{eq:stateproj} and \eqref{eq:fproof} imply the first equation in~\eqref{eq:muvarx}.
Furthermore, in view of \eqref{eq:bmxK}, we have
\begin{align}\label{eq:gproof}
\mathrm{var}_{\bm{x}} & = \mathbb{E}\big[ \bm{x} \bm{x}\t \big] - \mu_{\bm{x}} \mu_{\bm{x}}\t \nonumber \\
 & = \mathbf{G}_{\bm{u}}
\bar{\bm{u}} \bar{\bm{u}}\t \mathbf{G}_{\bm{u}}\t  + \mathbf{G}_{0}
\mathbb{E}\big[ x_0 x_0\t \big]
\mathbf{G}_{0}\t \nonumber \\
&~~~ +  \mathbf{G}_{\bm{u}} \bar{\bm{u}} \mu_0\t \mathbf{G}_{0}\t  +
\mathbf{G}_{0} \mu_0 \bar{\bm{u}}\t
\mathbf{G}_{\bm{u}}\t \nonumber \\
&~~~ + \mathbf{G}_{\bm{w}} \mathbb{E}\big[ \bm{w} \bm{w}\t \big]
\mathbf{G}_{\bm{w}}\t + \mathbf{G}_{\bm{u}} \bm{\mathcal{K}}
\mathbb{E}\big[ \bm{w} \bm{w}\t \big] \bm{\mathcal{K}}\t
\mathbf{G}_{\bm{u}}\t
\nonumber \\
&~~~ +\mathbf{G}_{\bm{w}} \mathbb{E}\big[ \bm{w} \bm{w}\t \big]
\bm{\mathcal{K}}\t \mathbf{G}_{\bm{u}}\t \nonumber \\
&~~~+ \mathbf{G}_{\bm{u}}
\bm{\mathcal{K}} \mathbb{E}\big[ \bm{w} \bm{w}\t \big]
\mathbf{G}_{\bm{w}}\t - \mu_{\bm{x}} \mu_{\bm{x}}\t.
\end{align}
In view of \eqref{eq:bmw} and the identities $\mathbb{E}\big[ x_0 x_0\t\big] = \Sigma_0 +
\mu_0 \mu_0\t$ and $\mathbb{E}\big[ \bar{\bm{u}}
x_0\t\big]=\bar{\bm{u}} \mu_0\t$, it follows that 
\begin{align*}
\mathrm{var}_{\bm{x}} & = \mathbf{G}_{0} \Sigma_0 \mathbf{G}_{0}\t + 
(\mathbf{G}_{\bm{u}}
\bar{\bm{u}} + \mathbf{G}_{0}\mu_0)(\mathbf{G}_{\bm{u}}
\bar{\bm{u}} + \mathbf{G}_{0}\mu_0)\t + \nonumber \\
&~~~ + (\mathbf{G}_{\bm{w}} + \mathbf{G}_{\bm{u}} \bm{\mathcal{K}} ) \mathbf{W}(\mathbf{G}_{\bm{w}} +
\mathbf{G}_{\bm{u}} \bm{\mathcal{K}} )\t - \mu_{\bm{x}} \mu_{\bm{x}}\t,
\end{align*}
which in view of \eqref{eq:fproof} implies that 
\begin{align}\label{eq:gproof2}
\mathrm{var}_{\bm{x}} & = \mathbf{G}_{0} \Sigma_0 \mathbf{G}_{0}\t \nonumber \\
&~~~~+ (\mathbf{G}_{\bm{w}} \mathbf{G}_{\bm{u}} \bm{\mathcal{K}} ) \mathbf{W}(\mathbf{G}_{\bm{w}} +
\mathbf{G}_{\bm{u}} \bm{\mathcal{K}} )\t =: \mathfrak{h}( \bm{\mathcal{K}}).
\end{align}
Finally, \eqref{eq:stateproj} and \eqref{eq:gproof2} imply that
\begin{align*}
\mathrm{var}_x(t) & = \mathbb{E}\big[ x(t)  x(t)\t \big] - \mu_x(t) \mu_x(t)\t \\
& = \mathbf{P}_{t+1} \left(\mathbb{E}\big[ \bm{x} \bm{x}\t \big] - \mu_{\bm{x}} \mu_{\bm{x}}\t \right)
\mathbf{P}_{t+1}\t \\
& = \mathbf{P}_{t+1} \mathrm{var}_{\bm{x}} \mathbf{P}_{t+1}\t = \mathbf{P}_{t+1} \mathfrak{h}( \bm{\mathcal{K}}) \mathbf{P}_{t+1}\t,
\end{align*}
which proves the validity of \eqref{eq:muvarx}.
\end{proof}

Next, we obtain an expression for the performance index of the minimum variance steering problem (Problem~\ref{problem1}) in terms of the decision variables $(\bar{\bm{u}}, \bm{\mathcal{K}})$ and subsequently, we prove its convexity.

\begin{proposition}\label{prop:J1}
The performance index $J_1(\overline{\mathscr{U}}, \mathscr{K})$ which is defined in \eqref{eq:perfindex} is equal to 
$\cJ_1(\bm{\cK})$, where
\begin{align}\label{eq:J1uK}
\cJ_1(\bm{\cK}) & := \mathrm{tr}\big( \mathbf{P}_{T+1} \mathfrak{h}(\bm{\mathcal{K}}) \mathbf{P}_{T+1}\t \big),
\end{align}
%
provided that the pairs of decision variables $(\overline{\mathscr{U}},\mathscr{K})$ and $(\bar{\bm{u}},\bm{\mathcal{K}})$ are related by \eqref{eq:mathcalK}. Furthermore, $\cJ_1(\bm{\cK})$ is a convex function. 
\end{proposition}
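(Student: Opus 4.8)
The plan is to treat the two assertions separately. The identity $J_1=\cJ_1(\bm{\cK})$ follows almost immediately from the preceding proposition: by \eqref{eq:perfindex} we have $J_1(\overline{\mathscr{U}},\mathscr{K})=\mathrm{tr}(\mathrm{var}_x(T))$, and setting $t=T$ in the variance formula \eqref{eq:muvarx} yields $\mathrm{var}_x(T)=\mathbf{P}_{T+1}\mathfrak{h}(\bm{\mathcal{K}})\mathbf{P}_{T+1}\t$. Here the hypothesis that $(\overline{\mathscr{U}},\mathscr{K})$ and $(\bar{\bm{u}},\bm{\mathcal{K}})$ are related through \eqref{eq:mathcalK} is exactly what licenses the use of the preceding proposition. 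Taking the trace of both sides then reproduces \eqref{eq:J1uK}, so the identity is done.

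For convexity, I would substitute the definition \eqref{eq:huK} of $\mathfrak{h}$ into \eqref{eq:J1uK} and isolate the term $\mathrm{tr}(\mathbf{P}_{T+1}\mathbf{G}_{0}\Sigma_0\mathbf{G}_{0}\t\mathbf{P}_{T+1}\t)$, which does not depend on $\bm{\mathcal{K}}$ and is therefore an additive constant irrelevant to convexity. The only remaining term is
\begin{equation*}
\mathrm{tr}\big( \mathbf{P}_{T+1}(\mathbf{G}_{\bm{w}}+\mathbf{G}_{\bm{u}}\bm{\mathcal{K}})\mathbf{W}(\mathbf{G}_{\bm{w}}+\mathbf{G}_{\bm{u}}\bm{\mathcal{K}})\t\mathbf{P}_{T+1}\t \big).
\end{equation*}
Because $\mathbf{W}$ is positive semi-definite it admits the symmetric square root $\mathbf{W}^{1/2}$, and I would use it to write this term as $\mathrm{tr}(N(\bm{\mathcal{K}})N(\bm{\mathcal{K}})\t)=\|N(\bm{\mathcal{K}})\|_{\mathrm{F}}^2$, where $N(\bm{\mathcal{K}}):=\mathbf{P}_{T+1}(\mathbf{G}_{\bm{w}}+\mathbf{G}_{\bm{u}}\bm{\mathcal{K}})\mathbf{W}^{1/2}$.

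The key observation is then that $N(\bm{\mathcal{K}})$ is an affine function of the entries of $\bm{\mathcal{K}}$, whereas $\|\cdot\|_{\mathrm{F}}^2$---being the sum of squares of the matrix entries---is convex. Since convexity is preserved under precomposition with an affine map and under the addition of a constant, $\cJ_1$ is convex, which completes the argument. I do not anticipate a genuine obstacle here; the only point demanding care is that $\mathbf{W}$ is assumed merely positive semi-definite (not definite) in the problem setup, so the symmetric square root, rather than a Cholesky factor of an invertible matrix, is the appropriate device. It is precisely the affine dependence of $N$ on $\bm{\mathcal{K}}$ together with the convexity of the squared Frobenius norm that drives the result.
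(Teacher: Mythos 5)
Your proposal is correct and follows essentially the same route as the paper: the identity is obtained by specializing the variance formula \eqref{eq:muvarx} at $t=T$ and taking the trace, and convexity comes from the structure of $\mathfrak{h}$ in \eqref{eq:huK}. The only difference is that the paper merely asserts convexity ``in view of \eqref{eq:huK},'' whereas you supply the missing justification---writing the $\bm{\mathcal{K}}$-dependent term as the squared Frobenius norm of an affine function of $\bm{\mathcal{K}}$ via the symmetric square root $\mathbf{W}^{1/2}$---which is a welcome and correct completion of the argument.
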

\begin{proof}
%
%
In view of \eqref{eq:perfindex}, we have
\begin{align*}
J_1(\overline{\mathscr{U}}, \mathscr{K} ) & = \mathrm{tr} \big( \mathrm{var}_x(T) \big) \nonumber \\
& = \mathrm{tr}\big( \mathbf{P}_{T+1} \mathrm{var}_{\bm{x}} \mathbf{P}_{T+1}\t \big) \nonumber \\
& = \mathrm{tr}\big( \mathbf{P}_{T+1} \mathfrak{h}(\bm{\mathcal{K}}) \mathbf{P}_{T+1}\t \big) =: \cJ_1(\bm{\cK}),
\end{align*}
where in the last equality, we have used \eqref{eq:basicfg}.
The validity of \eqref{eq:J1uK} follows in light of \eqref{eq:huK}.
Furthermore, in view of \eqref{eq:huK}, we conclude that the performance index $J_1(\overline{\mathscr{U}}, \mathscr{K})$ is a convex function.
\end{proof}

\begin{proposition}\label{prop:constr}
Let $\rho>0$. Then, the input constraint given in \eqref{eq:inputeffort} and the terminal constraint given in \eqref{eq:termmean1} determine a convex subset $\mathscr{D}_{\mathrm{MVS}}$ of the decision space $\mathscr{D}$. 
\end{proposition}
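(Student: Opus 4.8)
The plan is to show that the set $\mathscr{D}_{\mathrm{MVS}}$ is the intersection of two convex sets: one determined by the terminal mean constraint \eqref{eq:termmean1}, and one determined by the input effort constraint \eqref{eq:inputeffort}. Since the intersection of convex sets is convex, establishing convexity of each piece separately suffices. First I would rewrite both constraints explicitly as conditions on the decision pair $(\bar{\bm{u}}, \bm{\mathcal{K}}) \in \mathscr{D}$, using the moment expressions already derived in the preceding Proposition.

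For the terminal mean constraint, I would start from \eqref{eq:muvarx} and \eqref{eq:fu}, which give $\mu_x(T) = \mathbf{P}_{T+1}\mathfrak{f}(\bar{\bm{u}}) = \mathbf{P}_{T+1}(\mathbf{G}_{\bm{u}}\bar{\bm{u}} + \mathbf{G}_{0}\mu_0)$. Substituting into \eqref{eq:termmean1} yields $\mathbf{P}_{T+1}\mathbf{G}_{\bm{u}}\bar{\bm{u}} = \mu_\f - \mathbf{P}_{T+1}\mathbf{G}_{0}\mu_0$, which is an affine (indeed linear in $\bar{\bm{u}}$) equality constraint. The solution set of an affine equation is an affine subspace, hence convex; moreover it is independent of $\bm{\mathcal{K}}$, so as a subset of $\mathscr{D}$ it is convex.

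For the input effort constraint, I would express $C(\overline{\mathscr{U}}, \mathscr{K})$ in terms of $(\bar{\bm{u}}, \bm{\mathcal{K}})$. Using \eqref{eq:controlcmp}, $\bm{u} = \bar{\bm{u}} + \bm{\mathcal{K}}\bm{w}$, so $\sum_{t=0}^{T-1} u(t)\t u(t) = \bm{u}\t\bm{u}$, and taking expectations with $\mathbb{E}[\bm{w}]=\bm{0}$ and $\mathbb{E}[\bm{w}\bm{w}\t] = \mathbf{W}$ gives $\mathbb{E}[\bm{u}\t\bm{u}] = \bar{\bm{u}}\t\bar{\bm{u}} + \mathrm{tr}(\bm{\mathcal{K}}\mathbf{W}\bm{\mathcal{K}}\t)$, where the cross terms vanish by the zero-mean property of $\bm{w}$. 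I would then observe that $\bar{\bm{u}}\t\bar{\bm{u}}$ is a convex quadratic in $\bar{\bm{u}}$ and $\mathrm{tr}(\bm{\mathcal{K}}\mathbf{W}\bm{\mathcal{K}}\t)$ is convex in $\bm{\mathcal{K}}$ (it is a nonnegative-weighted sum of squares of the entries, since $\mathbf{W}\succeq 0$). Thus $\mathbb{E}[\bm{u}\t\bm{u}]$ is a convex function of $(\bar{\bm{u}}, \bm{\mathcal{K}})$, and the constraint $\mathbb{E}[\bm{u}\t\bm{u}] - \rho^2 \leq 0$ is a sublevel set of a convex function, hence convex.

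The only genuine point requiring care—the main obstacle, though a mild one—is the computation of $\mathbb{E}[\bm{u}\t\bm{u}]$ and in particular verifying that the cross term $2\,\mathbb{E}[\bar{\bm{u}}\t\bm{\mathcal{K}}\bm{w}] = 2\,\bar{\bm{u}}\t\bm{\mathcal{K}}\,\mathbb{E}[\bm{w}]$ vanishes and that $\mathbb{E}[\bm{w}\t\bm{\mathcal{K}}\t\bm{\mathcal{K}}\bm{w}] = \mathrm{tr}(\bm{\mathcal{K}}\mathbf{W}\bm{\mathcal{K}}\t)$, which follows from the cyclic property of the trace and \eqref{eq:bmw}. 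Finally I would conclude that $\mathscr{D}_{\mathrm{MVS}}$, being the intersection of the affine mean-constraint set and the convex input-effort sublevel set within $\mathscr{D}$, is convex.
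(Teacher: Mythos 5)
Your proposal is correct and follows essentially the same route as the paper's own proof: express the terminal mean constraint as an affine equality via $\mathbf{P}_{T+1}\mathfrak{f}(\bar{\bm{u}})=\mu_\f$ and the input constraint as the convex quadratic $\bar{\bm{u}}\t\bar{\bm{u}}+\mathrm{trace}(\bm{\mathcal{K}}\mathbf{W}\bm{\mathcal{K}}\t)\leq\rho^2$, then intersect the two convex sets. Your explicit verification that the cross term vanishes and that $\mathbb{E}[\bm{w}\t\bm{\mathcal{K}}\t\bm{\mathcal{K}}\bm{w}]=\mathrm{trace}(\bm{\mathcal{K}}\mathbf{W}\bm{\mathcal{K}}\t)$ is exactly the step the paper compresses into ``we have used \eqref{eq:bmw}.''
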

\begin{proof}
The input constraint given in \eqref{eq:inputeffort} can be written as 
$C(\overline{\mathscr{U}}, \mathscr{K}) \leq 0$, where $C(\overline{\mathscr{U}}, \mathscr{K}):=\mathbb{E}[\sum_{t=0}^{T-1}  u(t)\t u(t)] - \rho^2$. It follows that 
\begin{align*}
C(\overline{\mathscr{U}}, \mathscr{K}) & = \mathbb{E}[ \mathrm{trace}( \bm{u}\bm{u}\t )] - \rho^2 \\
& = \mathrm{trace}(\mathbb{E}[ (\bar{\bm{u}} + \bm{\cK} \bm{w})
(\bar{\bm{u}} + \bm{\cK} \bm{w})\t ]) - \rho^2 \\
& = \bar{\bm{u}}\t \bar{\bm{u}} + \mathrm{trace}(\bm{\cK} \mathbf{W} \bm{\cK}\t)  - \rho^2 =: \cC(\bar{\bm{u}}, \bm{\mathcal{K}}),
\end{align*}
where in the last derivation we have used \eqref{eq:bmw}. Clearly, $\cC$ is a convex (quadratic) function and thus, the set $\mathscr{D}_1 := \{(\bar{\bm{u}}, \bm{\mathcal{K}}) \in \mathscr{D}:~ \cC(\bar{\bm{u}}, \bm{\mathcal{K}}) \leq \rho^2 \}$ is convex.

In addition, the equality constraint given in \eqref{eq:termmean1} can be written as follows: $M(\overline{\mathscr{U}}) = 0$, where $M(\overline{\mathscr{U}}) := \mu_x(T) - \mu_\f$. In view of \eqref{eq:muvarx}, we have
\begin{equation}\label{eq:meancons2}
M(\overline{\mathscr{U}}) = \mathbf{P}_{T+1} \mathfrak{f}(\bar{\bm{u}}) - \mu_\f =: \cM(\bar{\bm{u}}).
\end{equation}
Because $\cM(\bar{\bm{u}})$ is an affine function, the equality constraint $\cM(\bar{\bm{u}})=0$ determines a convex set $\mathscr{D}_2 \subseteq \mathscr{D}$.
\end{proof}

The following theorem is a direct consequence of Propositions~\ref{prop:J1} and \ref{prop:constr}.
\begin{theorem}\label{theorem1}
Problem~\ref{problem1} is equivalent to the following convex program:
\begin{equation}
\min \cJ_1(\bm{\mathcal{K}})~~\text{subject~to}~~(\bar{\bm{u}}, \bm{\mathcal{K}})\in \mathscr{D}_1 \cap \mathscr{D}_2,
\end{equation}
where $\mathscr{D}_1:=\{ (\bar{\bm{u}}, \bm{\mathcal{K}}) \in \mathscr{D}:~\cC(\bar{\bm{u}}, \bm{\mathcal{K}}) \leq 0 \}$ and $\mathscr{D}_2 = \{ (\bar{\bm{u}}, \bm{\mathcal{K}})  \in \mathscr{D}: \cM(\bar{\bm{u}}) =0 \}$.
\end{theorem}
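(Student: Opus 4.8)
The plan is to assemble Propositions~\ref{prop:J1} and~\ref{prop:constr} into a single equivalence statement, the only genuinely new ingredient being that the change of variables relating the two parametrizations is a bijection. First I would record that the maps $\overline{\mathscr{U}}\mapsto\bar{\bm{u}}=\mathrm{vertcat}(\overline{\mathscr{U}})$ of~\eqref{eq:vertcat} and $\mathscr{K}\mapsto\mathbf{K}=\mathrm{bltr}(\mathscr{K})$ of~\eqref{eq:mathcalK} are linear bijections: $\mathrm{vertcat}$ simply stacks $\bar{u}(0),\dots,\bar{u}(T-1)$, while $\mathrm{bltr}$ assigns each gain $K_{(t,\tau)}$ to a prescribed block slot of a block lower-triangular array. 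Padding $\mathbf{K}$ with zeros to form $\bm{\mathcal{K}}$ is likewise invertible, so the correspondence $(\overline{\mathscr{U}},\mathscr{K})\leftrightarrow(\bar{\bm{u}},\bm{\mathcal{K}})$ is one-to-one and onto $\mathscr{D}$, the latter set being defined precisely as the collection of all such pairs.

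Next I would transport the cost and the constraints through this bijection. By Proposition~\ref{prop:J1}, under the correspondence~\eqref{eq:mathcalK} the objective satisfies $J_1(\overline{\mathscr{U}},\mathscr{K})=\cJ_1(\bm{\mathcal{K}})$ pointwise, so the two objective values agree. By Proposition~\ref{prop:constr}, the admissible set of Problem~\ref{problem1} --- the pairs obeying the input-effort constraint~\eqref{eq:inputeffort} and the terminal-mean constraint~\eqref{eq:termmean1} --- is carried exactly onto $\mathscr{D}_1\cap\mathscr{D}_2$, since~\eqref{eq:inputeffort} is equivalent to $\cC(\bar{\bm{u}},\bm{\mathcal{K}})\le 0$ and~\eqref{eq:termmean1} to $\cM(\bar{\bm{u}})=0$. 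Because the bijection preserves both the objective and the feasible set, a pair $(\overline{\mathscr{U}}\us,\mathscr{K}\us)$ solves Problem~\ref{problem1} if and only if its image $(\bar{\bm{u}}\us,\bm{\mathcal{K}}\us)$ solves the stated program, with equal optimal values; this is exactly the asserted equivalence.

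Finally I would certify convexity of the program itself. Proposition~\ref{prop:J1} already gives convexity of $\cJ_1$; Proposition~\ref{prop:constr} shows that $\mathscr{D}_1$ is convex (as $\cC$ is a convex quadratic) and that $\mathscr{D}_2$ is convex (as $\cM$ is affine), whence $\mathscr{D}_1\cap\mathscr{D}_2$ is convex as an intersection of convex sets. I do not anticipate a genuine obstacle here: all of the analytic content lives in the two preceding propositions, and what remains is the bookkeeping that upgrades a mere \emph{reduction} to a true \emph{equivalence}. The one point deserving a sentence of care is surjectivity onto $\mathscr{D}$ --- verifying that every block lower-triangular $\bm{\mathcal{K}}$ of the prescribed sparsity pattern arises from some admissible gain collection $\mathscr{K}$ --- so that no feasible point of the convex program is spurious and the two problems share identical feasible sets under the correspondence.
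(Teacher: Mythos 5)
Your proposal is correct and follows exactly the route the paper takes: the paper dispenses with a written proof entirely, stating only that Theorem~\ref{theorem1} is a direct consequence of Propositions~\ref{prop:J1} and~\ref{prop:constr}, which is precisely the assembly you carry out (with the added, and welcome, bookkeeping that the correspondence $(\overline{\mathscr{U}},\mathscr{K})\leftrightarrow(\bar{\bm{u}},\bm{\mathcal{K}})$ is a bijection onto $\mathscr{D}$). No gaps; your write-up is simply a more explicit version of the paper's one-line justification.
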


\begin{remark}
Theorem~\ref{theorem1} implies that Problem~\ref{problem1} is equivalent to a convex quadratically constrained quadratic program (QCQP), that is, a convex quadratic problem subject to (convex) quadratic inequality and affine equality constraints.
\end{remark}

\section{Reduction of the covariance steering problem into a semi-definite program}\label{s:convex}
Next, we associate the covariance steering problem into a semi-definite program.

\begin{proposition}
The constraint on the terminal state covariance $\mathrm{var}_{x}(T) \preceq \Sigma_\f$, for a given matrix $\Sigma_\f \in \mathbb{S}^{++}_n$ and the terminal constraint on the terminal mean $\mu_{x}(T) = \mu_\f$, for a given vector $\mu_\f \in \mathbb{R}^n$, determine a convex subset $\mathscr{D}_{\mathrm{CS}} \subseteq \mathscr{D}$.
\end{proposition}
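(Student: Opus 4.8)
The plan is to treat the two constraints in the proposition separately and then intersect the resulting sets. The terminal-mean constraint $\mu_x(T)=\mu_\f$ is already handled exactly as in the proof of Proposition~\ref{prop:constr}: by the first equation in \eqref{eq:muvarx} it reads $\mathbf{P}_{T+1}\mathfrak{f}(\bar{\bm{u}})=\mu_\f$, which is affine in $\bar{\bm{u}}$ (see \eqref{eq:meancons2}) and therefore defines an affine, hence convex, subset of $\mathscr{D}$ that constrains $\bar{\bm{u}}$ only. All the real content lies in the covariance constraint $\mathrm{var}_x(T)\preceq\Sigma_\f$, which, through \eqref{eq:huK}, depends on $\bm{\mathcal{K}}$ \emph{quadratically}, so its convexity is not immediate and is the crux of the argument.

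First I would substitute \eqref{eq:muvarx} and \eqref{eq:huK} to make the covariance constraint explicit, and then factor $\mathbf{W}=\mathbf{W}^{1/2}\mathbf{W}^{1/2}$ (the square root exists because $\mathbf{W}\in\mathbb{S}^{+}$, per Section~\ref{sub:notation}). Introducing the affine map
\[
\mathbf{L}(\bm{\mathcal{K}}):=\mathbf{P}_{T+1}(\mathbf{G}_{\bm{w}}+\mathbf{G}_{\bm{u}}\bm{\mathcal{K}})\mathbf{W}^{1/2},
\]
and the constant matrix $\mathbf{R}:=\Sigma_\f-\mathbf{P}_{T+1}\mathbf{G}_{0}\Sigma_0\mathbf{G}_{0}\t\mathbf{P}_{T+1}\t$, the constraint $\mathrm{var}_x(T)\preceq\Sigma_\f$ is equivalent to $\mathbf{R}-\mathbf{L}(\bm{\mathcal{K}})\mathbf{L}(\bm{\mathcal{K}})\t\in\mathbb{S}^{+}_n$. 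The whole point of this rewriting is that $\mathbf{L}$ is affine in $\bm{\mathcal{K}}$ while $\mathbf{R}$ is fixed.

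The key step, and the main obstacle, is to establish convexity of $\{\bm{\mathcal{K}}:\mathbf{R}-\mathbf{L}(\bm{\mathcal{K}})\mathbf{L}(\bm{\mathcal{K}})\t\in\mathbb{S}^{+}_n\}$ in spite of the quadratic dependence. I would do this by a Schur-complement argument, which is also precisely what exposes the semi-definite program advertised by the section title: since the lower-right block below is positive definite, $\mathbf{R}-\mathbf{L}(\bm{\mathcal{K}})\mathbf{L}(\bm{\mathcal{K}})\t\in\mathbb{S}^{+}_n$ holds if and only if
\[
\begin{bmatrix}\mathbf{R} & \mathbf{L}(\bm{\mathcal{K}})\\[2pt]\mathbf{L}(\bm{\mathcal{K}})\t & I\end{bmatrix}\succeq\mathbf{0}.
\]
The left-hand side is affine in $\bm{\mathcal{K}}$, so this is a linear matrix inequality; because the positive semi-definite cone is convex and the preimage of a convex set under an affine map is convex, the feasible set of this LMI is a convex subset of $\mathscr{D}$ (depending on $\bm{\mathcal{K}}$ only).

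To keep the argument self-contained, I would alternatively record the direct fact that $\bm{\mathcal{K}}\mapsto\mathbf{L}(\bm{\mathcal{K}})\mathbf{L}(\bm{\mathcal{K}})\t$ is convex in the Loewner order: writing $X=\mathbf{L}(\bm{\mathcal{K}}_1)$, $Y=\mathbf{L}(\bm{\mathcal{K}}_2)$ and $\lambda\in[0,1]$, affineness of $\mathbf{L}$ together with the identity
\[
\lambda XX\t+(1-\lambda)YY\t-(\lambda X+(1-\lambda)Y)(\lambda X+(1-\lambda)Y)\t=\lambda(1-\lambda)(X-Y)(X-Y)\t\in\mathbb{S}^{+}_n
\]
gives $\mathbf{L}(\lambda\bm{\mathcal{K}}_1+(1-\lambda)\bm{\mathcal{K}}_2)\mathbf{L}(\lambda\bm{\mathcal{K}}_1+(1-\lambda)\bm{\mathcal{K}}_2)\t\preceq\lambda XX\t+(1-\lambda)YY\t$, so a Loewner sublevel set of this map is convex. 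Finally, intersecting the convex set defined by the covariance constraint with the affine set defined by the mean constraint yields $\mathscr{D}_{\mathrm{CS}}$, which is convex as an intersection of convex sets, completing the proof.
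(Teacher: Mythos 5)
Your proof is correct and follows essentially the same route as the paper: the mean constraint is handled as an affine equality in $\bar{\bm{u}}$, and the covariance constraint is rewritten with your $\mathbf{R}$ and affine map $\mathbf{L}(\bm{\mathcal{K}})$ (the paper's $\tilde{\Sigma}_\f$ and $\bm{\zeta}(\bm{\cK})$) and then converted via the Schur complement into an LMI, whose feasible set is convex as the preimage of the positive semi-definite cone under an affine map. Your additional identity $\lambda XX\t+(1-\lambda)YY\t-(\lambda X+(1-\lambda)Y)(\lambda X+(1-\lambda)Y)\t=\lambda(1-\lambda)(X-Y)(X-Y)\t$ is a nice self-contained alternative to citing the Schur-complement lemma, but it does not change the substance of the argument.
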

\begin{proof}
In view of \eqref{eq:muvarx}, the constraint $\mathrm{var}_x(T) \preceq \Sigma_\f$ can be written as the following positive semidefinite constraint:
\begin{align}\label{eq:covacons}
\cV(\bm{\cK}) \in \mathbb{S}^{+}_n,~~~~ \cV(\bm{\cK}) := \Sigma_\f - \mathbf{P}_{T+1} \mathfrak{h}(\bm{\cK}) \mathbf{P}_{T+1}\t.
\end{align}
After plugging the expression of $\mathfrak{h}(\bm{\cK})$, which is given in \eqref{eq:huK}, into \eqref{eq:covacons}, it follows that
\begin{equation}
 \cV(\bm{\cK}) = \tilde{\Sigma}_\f - \bm{\zeta}(\bm{\cK})\bm{\zeta}(\bm{\cK})\t,
\end{equation}
where 
\begin{subequations}
\begin{align}
\tilde{\Sigma}_\f & := \Sigma_\f -\mathbf{P}_{T+1} \mathbf{G}_{0} \Sigma_0 \mathbf{G}_{0}\t \mathbf{P}_{T+1}\t, \\ 
\bm{\zeta}(\bm{\cK}) & := \mathbf{P}_{T+1}(\mathbf{G}_{\bm{u}} \bm{\mathcal{K}} + \mathbf{G}_{\bm{w}}) \mathbf{W}^{1/2}.
\end{align} 
\end{subequations}
Thus, the constraint $\cV(\bm{\cK}) \in \mathbb{S}^{+}_n$ can be written as follows:
\begin{equation}
\mathbf{\cN}(\bm{\cK})\in \mathbb{S}_{2n}^+,~~~\mathbf{\cN}(\bm{\cK}) := \begin{bmatrix} \tilde{\Sigma}_\f & \bm{\zeta}(\bm{\cK}) \\ \bm{\zeta}(\bm{\cK})\t & I_n \end{bmatrix}.
\end{equation}
The equivalence of the constraints $\cV(\bm{\cK}) \in \mathbb{S}^{+}_n$ and $\mathbf{\cN}(\bm{\cK}) \in \mathbb{S}_{2n}^+$ follows from the fact that $(\Sigma_\f - \mathbf{G}_{0} \Sigma_0 \mathbf{G}_{0}\t) \in \mathbb{S}_n$ and the (obvious) fact that $I_n \in \mathbb{S}^{++}_n$ (see, for instance, Theorem~4.9 in \cite{b:calafiore2014}; note that $\cV(\bm{\cK})$ is the Schur complement of $I_n$ in $\mathbf{\cN}(\bm{\cK})$). Given that the function $\bm{\zeta}(\bm{\cK})$ is affine, we conclude that the positive semidefinite constraint $\mathbf{\cN}(\bm{\cK})\in \mathbb{S}_{2n}^+$ corresponds to an LMI (convex) constraint and thus the set $\mathscr{D}_3 := \{ (\bar{\bm{u}},\bm{\mathcal{K}}) \in \mathscr{D}: \mathbf{\cN}(\bm{\cK})\in \mathbb{S}_{2n}^+\}$ is convex. Finally, the terminal constraint $\mu_x(T) = \mu_\f$ determines the convex set $\mathscr{D}_2 := \{ (\bar{\bm{u}}, \bm{\mathcal{K}}) \in \mathscr{D}: \cM(\bar{\bm{u}})=0\}$, where $\cM(\bar{\bm{u}})$ is an affine function which is defined as in the proof of Proposition~\ref{prop:constr}. Therefore, the set $\mathscr{D}_{\mathrm{CS}} := \mathscr{D}_2 \cap \mathscr{D}_3$. This completes the proof.
\end{proof}

\section{Control policy parameterization based on truncated histories of disturbances}

If the time horizon $[0,T]_d$ in the formulation of Problem~1 or Problem~2  is large, then the dimensions of the convex programs corresponding to these problems can be significantly large; consequently, the computational cost for their solution can be substantial. To alleviate this issue, one can consider control policies which are sequences of control laws which can be expressed as affine functions of truncated histories of disturbances. 

Specifically, let $\eta \in [1, T-2]_d$ be the length of the subsequence of past disturbances which can be used for the characterization of the control law at time $t \in [0,T-1]_d$, which will now be an affine function of the elements of $W_{\sigma:t-1}$, where $\sigma := \max\{0, t - \eta \}$. We denote this control law by $\kappa_{\mathrm{tr}}(t,W_{\sigma:t-1})$, where
\begin{align}\label{eq:policytr}
\kappa_{\mathrm{tr}}(t,W_{\sigma:t-1})& = \begin{cases} \bar{u}(t) + \sum_{\tau =\sigma}^{t-1}
K_{(t-1,\tau)} w(\tau),\\
~\qquad~\qquad~~\mathrm{if}~ t \in[1,T-1]_d,\\
\bar{u}(0),~\qquad~~\mathrm{if}~ t=0.
\end{cases}
\end{align}
Note that when $\sigma = 0$, then $\kappa_{\mathrm{tr}}(t,W_{\sigma:t-1})$ corresponds to the untruncated control policy $\kappa(t,W_{0:t-1})$ defined in \eqref{eq:policy}.
It is implied that the gains $K_{(t-1,\tau)}$ which do not appear in the expression of the control law given in \eqref{eq:policytr} because the corresponding disturbances have been truncated are set equal to zero. In particular, the vector of inputs $\bm{u}$, can be expressed as follows: 
\begin{equation}\label{eq:controlcmptrun}
\bm{u} = \bar{\bm{u}} + \bm{\mathcal{K}}_{\mathrm{tr}} \bm{w},
\end{equation}
where $\bar{\bm{u}} := \mathrm{vertcat}(\overline{\mathscr{U}})$,
and $\bm{\mathcal{K}}_{\mathrm{tr}} := \big[ \begin{smallmatrix} \mathbf{0} & \mathbf{0} \\ \mathbf{K}_{\mathrm{tr}} & \mathbf{0}\end{smallmatrix} \big]$ with
\begin{align}\label{eq:mathcalKtrunc}
\mathbf{K}_{\mathrm{tr}} & := \left[\begin{smallmatrix} 
K_{(\sigma, \sigma)} & \mathbf{0}  &  \dots & \mathbf{0}  \\
K_{(\sigma+1, \sigma)} & K_{(\sigma+1, \sigma+1)}  &  \dots & \mathbf{0}  \\
\vdots & \vdots & \ddots & \vdots \\
K_{(T-2, \sigma)} & K_{(T-2,\sigma + 1)} & \dots & K_{(T-2, T-2)}
\end{smallmatrix} \right],
\end{align} 
for $t-1 \geq \eta$, and $\mathbf{K}_{\mathrm{tr}} = \mathbf{K}$, where $\mathbf{K}$ is defined as in \eqref{eq:mathcalK}, otherwise. Note that $t-1 \geq \eta$ means that the length of $[0,t-1]_d$ is at least as long as the truncation length $\eta$ (if this is not the case, there in nothing to truncate). All the equations we have derived in Sections~\ref{s:prelim} hold true after substituting $\mathbf{K}$ with $\mathbf{K}_{\mathrm{tr}}$.

\section{Concluding Remarks}\label{s:concl}
In this paper, we have addressed minimum variance and covariance control problems for discrete-time linear systems by means of convex optimization approaches. In our proposed approach, we have utilized a stochastic version of the affine disturbance feedback control parameterization and we have shown that this particular parametrization allows one to reduce the stochastic optimal control problems into tractable convex programs more  directly that other control policy parametrizations such as the state feedback parametrization. We have also shown that by utilizing a variation of the proposed parameterization which utilizes truncations of the histories of disturbances, one can design suboptimal controllers in a way that strikes the desired balance between optimality and computational cost. In our future work, we will consider minimum variance and covariance steering problems with incomplete and imperfect state information. We will also consider the case of nonlinear stochastic systems which we plan to address by means of stochastic model predictive control algorithms.

\section*{Acknowledgments}
This research  has been supported in part by NSF award CMMI-1937957.

\bibliographystyle{ieeetr}
\bibliography{acc_bib2020}
\end{document}